\theoremstyle{plain}
\newtheorem{theorem}{Theorem}%[section]
\theoremstyle{definition}
\newtheorem{definition}[theorem]{Definition}
\newcommand{\SL}{\operatorname{SL}}
\newcommand{\actson}{\curvearrowright}
\newcommand{\Z}{\mathbb{Z}}
\newcommand{\al}{\alpha}
\newcommand{\recht}{\rightarrow}
\newcommand{\R}{\mathbb{R}}
\newcommand{\Q}{\mathbb{Q}}
\newcommand{\T}{\mathbb{T}}
\newcommand{\rL}{\mathord{\text{\rm L}}}
\newcommand{\M}{\mathord{\text{\rm M}}}
\newcommand{\Hh}{\hat{H}}
\newcommand{\cH}{\mathcal{H}}
\newcommand{\si}{\sigma}
\newcommand{\cU}{\mathcal{U}}
\newcommand{\C}{\mathbb{C}}
\newcommand{\cR}{\mathcal{R}}
\newcommand{\cL}{\mathcal{L}}
\newcommand{\GL}{\operatorname{GL}}
\newcommand{\om}{\omega}
\newcommand{\Ker}{\operatorname{Ker}}
\newcommand{\lspan}{\operatorname{span}}
\begin{document}

\begin{center}
{\LARGE\bf Rigid actions need not be strongly ergodic}

\bigskip

{\sc by Adrian Ioana$^{(1)}$ and Stefaan Vaes$^{(2)}$
\setcounter{footnote}{1}\footnotetext{Supported by a Clay Research Fellowship.\\ Mathematics Department; University of California at Los Angeles, CA 90095-1555 (United States). \\ E-mail: adiioana@math.ucla.edu}
\setcounter{footnote}{2}\footnotetext{Partially supported by ERC Starting Grant VNALG-200749, Research Programme G.0231.07 of the Research Foundation --
Flanders (FWO) and K.U.Leuven BOF research grant OT/08/032.\\ Department of Mathematics;
    K.U.Leuven; Celestijnenlaan 200B; B--3001 Leuven (Belgium).
    \\ E-mail: stefaan.vaes@wis.kuleuven.be}}
%
%\bigskip
%
%To appear in {\it Proceedings of the American Mathematical Society}
\end{center}

\begin{abstract}\noindent
A probability measure preserving action $\Gamma \actson (X,\mu)$ is called rigid if the inclusion of $\rL^\infty(X)$ into the crossed product $\rL^\infty(X) \rtimes \Gamma$ has the relative property (T) in the sense of Popa. We give examples of rigid, free, probability measure preserving actions that are ergodic but not strongly ergodic. The same examples show that rigid actions may admit non-rigid quotients.
\end{abstract}

\section*{Introduction}

Popa raised the question whether all rigid free ergodic probability measure preserving (pmp) actions are strongly ergodic. We answer this question negatively by providing a counterexample of the form $\Gamma \actson \T^4$ for some well chosen $\Gamma \subset \SL(4,\Z)$. Actually, as we recall below, the notion of rigidity makes sense for all pmp actions. In the same way as property (T) groups can have infinite center, we give examples of rigid actions that are not ergodic. By enlarging the acting group, rigidity is preserved and we obtain ergodic but non strongly ergodic rigid actions.

Popa's definition of relative property (T) for inclusions of finite von Neumann algebras is formulated in the language of bimodules. If $(M,\tau)$ is a von Neumann algebra with a faithful tracial state $\tau$ and $\cH$ is a Hilbert $M$-$M$-bimodule, we say that
\begin{itemize}
\item $\xi \in \cH$ is $M$-central if $x \xi = \xi x$ for all $x \in M$;
\item $\xi \in \cH$ is tracial if $\langle x \xi, \xi \rangle = \tau(x) = \langle \xi x, \xi \rangle$ for all $x \in M$;
\item the sequence of unit vectors $\xi_n \in \cH$ is almost $M$-central if $\|x \xi_n - \xi_n x \| \recht 0$ for all $x \in M$.
\end{itemize}

\begin{definition}[Definition 4.2.1 in \cite{Po01}]
Let $(M,\tau)$ be a von Neumann algebra with a faithful tracial state $\tau$ and $A \subset M$ a von Neumann subalgebra. We say that $A \subset M$ is \emph{rigid} if the following holds. Whenever $\cH$ is a Hilbert $M$-$M$-bimodule and $\xi_n \in \cH$ is a sequence of tracial, almost $M$-central vectors, then $\|\xi_n - P_A(\xi_n)\| \recht 0$, where $P_A$ denotes the orthogonal projection of $\cH$ onto the subspace of $A$-central vectors.
\end{definition}

For a subgroup $H \subset G$ of a countable group $G$, rigidity of $\rL H \subset \rL G$ is equivalent with the relative property (T) of $H \subset G$ in the sense of Kazhdan-Margulis, see \cite[Proposition 5.1]{Po01}. By definition, rigidity of $A \subset M$ is preserved when making $A$ smaller or $M$ larger.

Whenever $\Gamma \actson (X,\mu)$ is a pmp action of a countable group $\Gamma$, we denote by $M=\rL^\infty(X) \rtimes \Gamma$ the crossed product von Neumann algebra. We always equip $M$ with the trace given by
$$\tau(a u_g) = \begin{cases} \int_X a \; d\mu &\;\;\text{if}\;\; g = e \; , \\ 0 &\;\;\text{if}\;\; g \neq e \; .\end{cases}$$

\begin{definition}[\cite{Po01}]\label{def.rigid}
A probability measure preserving action $\Gamma \actson (X,\mu)$ is said to be rigid if the inclusion $\rL^\infty(X) \subset \rL^\infty(X) \rtimes \Gamma$ is rigid.
\end{definition}

By \cite[Theorem 0.3]{Io09} rigidity of an ergodic pmp action $\Gamma \overset{\sigma}{\actson} (X,\mu)$ is equivalent with the following ergodic theoretic property: there is no sequence of probability measures $\nu_n$ on $X \times X$ satisfying the properties
\begin{itemize}
\item $p^i_* \nu_n = \mu$ for all $n$ and $i =1,2$, where $p_i : X \times X \recht X$ denotes the projection on the $i$-th coordinate,
\item $\nu_n(\Delta) = 0$ for all $n$, where $\Delta \subset X \times X$ denotes the diagonal,
\item for all bounded measurable functions $f,g : X \recht \C$, one has
$$\int_{X \times X} f(x) g(y) \; d\nu_n(x,y) \recht \int_X f(x) g(x) \; d\mu(x) \; ,$$
\item $\|(\si_g \times \si_g)_* \nu_n - \nu_n\| \recht 0$ for all $g \in \Gamma$, where $\| \, \cdot \,\|$ denotes the norm of a bounded signed measure on $X \times X$ when viewed as a linear functional on the Banach space of bounded Borel functions $\cL^\infty(X \times X)$ equipped with the supremum norm.
\end{itemize}

One associates to any ergodic pmp action $\Gamma \actson (X,\mu)$ the type II$_1$ orbit equivalence relation $\cR := \cR(\Gamma \actson X)$, giving rise to a II$_1$ factor $\rL \cR$ with Cartan subalgebra $\rL^\infty(X)$. When the action $\Gamma \actson (X,\mu)$ is essentially free, we have $\rL \cR = \rL^\infty(X) \rtimes \Gamma$. In the non-free case this equality does not hold. Moreover, it is an open problem whether in general the rigidity of a non-free ergodic pmp action $\Gamma \actson (X,\mu)$ in the sense of Definition \ref{def.rigid} (i.e.\ the relative property (T) of $\rL^\infty(X) \subset \rL^\infty(X) \rtimes \Gamma$ or the above ergodic theoretic criterion) is equivalent or not with the rigidity of the orbit equivalence relation, i.e.\ the relative property (T) of $\rL^\infty(X) \subset \rL \cR$. We refer to \cite[Remark 4.6]{Io09} for more details.

We finally recall the notion of strong ergodicity. A pmp action $\Gamma \actson (X,\mu)$ is said to be strongly ergodic if there are no non-trivial asymptotically $\Gamma$-invariant subsets in $X$. More precisely, whenever $X_n \subset X$ is a sequence of measurable subsets such that the measure of the symmetric difference $g \cdot X_n \bigtriangleup X_n$ tends to zero for all $g \in \Gamma$, then $\mu(X_n) (1-\mu(X_n)) \recht 0$. Equivalently, there is no bounded sequence of functions $a_n \in \rL^\infty(X)$ such that $\|g \cdot a_n - a_n \|_2 \recht 0$ for all $g \in \Gamma$ and $\int_X a_n \; d\mu = 0$ for all $n$.

In \cite[Theorem 2.1]{JS85} it is shown that an ergodic pmp action $\Gamma \actson (X,\mu)$ is non strongly ergodic if and only if there exists an ergodic pmp action $\Z \actson (Y,\eta)$ and a non-singular quotient map $\pi : X \recht Y$ satisfying $\pi(\Gamma \cdot x) = \Z \cdot \pi(x)$ for a.e.\ $x \in X$. Since actions of amenable groups are never rigid, in an attempt to establish that rigidity implies strong ergodicity, we tried to prove that quotients of rigid actions remain rigid. As we observe in Remark 3 at the end of the article, quotients of rigid actions actually need not be rigid.

A pmp action $\Gamma \actson (X,\mu)$ is strongly ergodic whenever it has spectral gap, i.e.\ if the unitary representation $\Gamma \recht \cU(\rL^2(X) \ominus \C 1)$ does not weakly contain the trivial representation. Since $\SL(2,\Z)$ is non-amenable while all stabilizers for the action of $\SL(2,\Z)$ on $\Z^2 - \{0\}$ are amenable, it follows that the natural action of $\SL(2,\Z)$ on $\R^2/\Z^2 = \T^2$ has spectral gap and in particular is strongly ergodic, see \cite{Sc79}. From this it follows immediately that the action $\SL(n,\Z) \actson \R^n/\Z^n = \T^n$ has spectral gap whenever $n \geq 3$.

If $\Gamma$ has property (T) then all ergodic pmp actions are strongly ergodic. An amenable group $\Gamma$ cannot act strongly ergodically on a standard non-atomic probability space.

\section*{Examples of rigid actions that are ergodic but not strongly ergodic}

Let $k \geq 0$ and $n \geq 2$ be integers. Fix an arbitrary subgroup $\Lambda \subset \SL(k,\Z)$ and define the group $G \subset \SL(k+n+1,\Z)$ given by
$$G = \begin{pmatrix} \Lambda & \M_{k,n}(\Z) & \Z^k \\ 0 & \SL(n,\Z) & \Z^n \\ 0 & 0 & 1 \end{pmatrix} \; .$$
Denote by $I_s$ the $s \times s$ identity matrix and define the following subgroups of $G$.
\begin{align*}
\Gamma = \begin{pmatrix} \Lambda & \M_{k,n}(\Z) & 0 \\ 0 & \SL(n,\Z) & 0 \\ 0 & 0 & 1 \end{pmatrix} \qquad & , \qquad
H = \begin{pmatrix} I_k & 0 & \Z^k \\ 0 & I_n & \Z^n \\ 0 & 0 & 1 \end{pmatrix} \; , \\
H_1 = \begin{pmatrix} I_k & \M_{k,n}(\Z) & \Z^k \\ 0 & I_n & \Z^n \\ 0 & 0 & 1 \end{pmatrix} \qquad & , \qquad
H_2 = \begin{pmatrix} I_k & \M_{k,n}(\Z) & \Z^k \\ 0 & \SL(n,\Z) & \Z^n \\ 0 & 0 & 1 \end{pmatrix} \; .
\end{align*}

Note that $H,H_1,H_2$ are all normal subgroups of $G$ and that we can view $G$ as the semidirect product
$$G = H \rtimes \Gamma = H_1 \rtimes (\Lambda \times \SL(n,\Z)) = H_2 \rtimes \Lambda \; .$$
We identify the group von Neumann algebra $\rL H$ with $\rL^\infty(\Hh)$, where $\Hh \cong \T^{k+n}$ is equipped with the Lebesgue measure. In this way, we get a natural probability measure preserving action $\Gamma \actson \Hh$ which has the property that $\rL^\infty(\Hh) \rtimes \Gamma = \rL G$.

\begin{theorem}\label{ourthm1}
Let $k \geq 0$ and $n \geq 2$ be integers. Fix an arbitrary subgroup $\Lambda \subset \SL(k,\Z)$ and define the groups $G, \Gamma, H, H_1, H_2$ as above.
\begin{enumerate}
\item The pair $H_1 < H_2$ has the relative property (T). In particular, $H < G$ has the relative property (T) and $\Gamma \actson \Hh$ is a rigid action.
\item The action $\Gamma \actson \Hh$ is ergodic if and only if $\{A x \mid A \in \Lambda\}$ is infinite for all $x \in \Z^k - \{0\}$. In particular, if $k \geq 1$ and $\Lambda = \{I_k\}$ is the trivial group, the action $\Gamma \actson \Hh$ is rigid but not ergodic.
\item The action $\Gamma \actson \Hh$ is strongly ergodic if and only if the natural action $\Lambda \actson \widehat{\Z^k} = \T^k$ is strongly ergodic.
\item The action $\Gamma \actson \Hh$ is essentially free.
\end{enumerate}
In particular, if $k,n \geq 2$, if $A \in \SL(k,\Z)$ is a matrix with no eigenvalues being a root of unity and if $\Lambda = A^\Z$, then the action $\Gamma \actson \Hh$ is rigid, free, probability measure preserving and ergodic, but not strongly ergodic.
\end{theorem}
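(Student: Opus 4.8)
\emph{Reduction of the final assertion.} The plan is to prove items (1)--(4) and then dispose of the two non-formal verifications needed for $\Lambda=A^\Z$. Items (1) and (4) give rigidity and essential freeness for \emph{every} $\Lambda$, and $\Gamma\actson\Hh$ is automatically probability measure preserving since $\Gamma$ acts on $\Hh\cong\T^{k+n}$ by continuous group automorphisms. If no eigenvalue of $A$ is a root of unity, then $A^m x\ne x$ for all $x\in\Z^k\setminus\{0\}$ and $m\ge 1$, so every nonzero $A^\Z$-orbit in $\Z^k$ is infinite and item (2) yields ergodicity; moreover $A$ has infinite order, so $\Lambda=A^\Z\cong\Z$ and the action $\Lambda\actson\widehat{\Z^k}=\T^k$, being an action of an amenable group on a non-atomic standard probability space, is not strongly ergodic, whence $\Gamma\actson\Hh$ is not strongly ergodic by item (3). (Alternatively, $\Lambda\actson\T^k$ is an ergodic $\Z$-action quotient of $\Gamma\actson\Hh$, so one may instead invoke \cite[Theorem 2.1]{JS85}.) Thus everything reduces to (1)--(4).

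\emph{Items (2) and (4).} Conjugation identifies $\Gamma$ with the parabolic subgroup $P=\left(\begin{smallmatrix}\Lambda & \M_{k,n}(\Z)\\ 0 & \SL(n,\Z)\end{smallmatrix}\right)\subset\SL(k+n,\Z)$ acting on $H\cong\Z^{k+n}$ in the obvious way, so that $\Gamma\actson\Hh$ is the dual action of $P$ on $\T^{k+n}$ by automorphisms. The conjugation action of $(C,B,g)$ on $H$ is $(v,w)\mapsto(Cv+Bw,gw)$, which is trivial only when $(C,B,g)=e$; hence for $\gamma\ne e$ the fixed-point set of $\gamma$ on $\Hh$ is a proper closed subgroup of $\T^{k+n}$, so of measure zero, which proves (4). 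For (2), the automorphism action on $\Hh$ is ergodic iff no nonzero character of $\Hh$, i.e.\ no nonzero element of $H=\Z^{k+n}$, has a finite $P$-orbit; if $(v,w)$ has $w\ne 0$ its orbit is infinite because the $\SL(n,\Z)$-orbit of $w$ is infinite for $n\ge 2$, and if $w=0$ the orbit of $(v,0)$ is $\{(Cv,0):C\in\Lambda\}$ --- which is exactly the stated criterion.

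\emph{Item (3).} Write $\Hh=\T^k\times\T^n$. The projection $\Hh\to\T^k$ is a $\Gamma$-equivariant measure preserving quotient, with $\Gamma$ acting through $\Gamma\twoheadrightarrow\Lambda$; since quotients of strongly ergodic actions are strongly ergodic, non-strong-ergodicity of $\Lambda\actson\T^k$ forces non-strong-ergodicity of $\Gamma\actson\Hh$. For the converse I would decompose, $\Gamma$-equivariantly, $\rL^2(\Hh)\ominus\C 1=(\rL^2(\T^k)\ominus\C 1)\oplus(\rL^2(\T^k)\otimes(\rL^2(\T^n)\ominus\C 1))$. On the first summand $\Gamma$ acts through the Koopman representation of $\Lambda\actson\T^k$, which has spectral gap by hypothesis; on the second the subgroup $\SL(n,\Z)\subset\Gamma$ acts as $\mathrm{id}\otimes\pi$, where $\pi$ is the Koopman representation of $\SL(n,\Z)\actson\T^n$ on $\rL^2(\T^n)\ominus\C 1$, which has spectral gap for $n\ge 2$ by Schmidt \cite{Sc79}. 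Hence there are no nonzero almost invariant vectors, so $\Gamma\actson\Hh$ is strongly ergodic.

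\emph{Item (1): the main obstacle.} Here $H_2=H_1\rtimes\SL(n,\Z)$ with $H_1$ two-step nilpotent, $[H_1,H_1]$ equal to the ``$v$-part'' $\Z^k$ (which is in fact central in $H_2$), and $H_1/\Z^k\cong\M_{k,n}(\Z)\oplus\Z^n$. The quotient pair $(H_1/\Z^k,\,H_2/\Z^k)$ is of the form $(W,\,W\rtimes\SL(n,\Z))$ with $W$ a finite direct sum of standard and dual-standard $\SL(n,\Z)$-modules, and such a pair has the relative property (T) for $n\ge 2$ by the standard extension of the Kazhdan--Margulis argument. The plan is then to lift the relative property (T) through the central extension $1\to\Z^k\to H_2\to H_2/\Z^k\to 1$: given a unitary representation of $H_2$ with almost invariant vectors, decompose it over the dual of $\Z^k$; since every element of $\Z^k$ is a commutator $[a,b]$ with $a,b\in H_1$ (indeed $\Z^k=[\,\Z^n\text{-part},\,\M_{k,n}(\Z)\text{-part}\,]$), the estimate $\|\pi([a,b])\xi-\xi\|\le 2\|\pi(a)\xi-\xi\|+2\|\pi(b)\xi-\xi\|$ forces the spectral mass of the almost invariant vectors to concentrate at the trivial character of $\Z^k$. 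The delicate step --- which I expect to be the crux of the whole theorem --- is to upgrade this to a genuine $\Z^k$-invariant, hence $H_1$-invariant, vector, i.e.\ to rule out almost invariant vectors supported in a small punctured neighbourhood of the trivial character; morally this works because on each isotypic component with nontrivial central character the $H_1$-representation is of Heisenberg type and $\SL(n,\Z)$ acts on it via a Weil-type representation with no almost invariant vectors, so such components are irrelevant to almost invariance for $H_2$. (This is where the analysis of \cite{Io09}, or a central-extension lifting statement applicable because $\Z^k\subseteq[H_2,H_2]$, enters.) Finally, from the relative property (T) of $H_1<H_2$ one obtains it for $H<G$, because $H\subseteq H_1$ and $H_2\subseteq G$ and relative property (T) is preserved under shrinking the distinguished subgroup and enlarging the ambient group; by \cite[Proposition 5.1]{Po01} this is precisely the rigidity of $\rL^\infty(\Hh)=\rL H\subseteq\rL G=\rL^\infty(\Hh)\rtimes\Gamma$, that is, of $\Gamma\actson\Hh$.
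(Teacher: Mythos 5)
Your items (2) and (4) and the reduction of the final assertion are correct and essentially identical to the paper's. In item (3) the ``only if'' direction (via the quotient $\Hh \recht \T^k$) is fine, but the converse as written conflates strong ergodicity with spectral gap: from the hypothesis that $\Lambda \actson \T^k$ is strongly ergodic you may \emph{not} conclude that the Koopman representation of $\Lambda$ on $\rL^2(\T^k)\ominus \C 1$ has spectral gap -- strong ergodicity is strictly weaker. The repair is what the paper does: use spectral gap only on $\rL^2(\Hh)\ominus\rL^2(\T^k)$ (where $\SL(n,\Z)$ acts through $\ell^2(\Z^{k+n})\ominus\ell^2(\Z^k)$, which has no almost invariant vectors for $n\geq 2$), deduce that a bounded asymptotically invariant sequence $a_i$ satisfies $\|a_i - E(a_i)\|_2\recht 0$ for the conditional expectation $E$ onto $\rL^\infty(\T^k)$, and then apply strong ergodicity of $\Lambda\actson\T^k$ to the \emph{bounded} sequence $E(a_i)$.

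The genuine gap is item (1), which is the heart of the theorem. You correctly identify the structure ($\Z^k=[H_1,H_1]$ central in $H_2$, with $H_2/\Z^k \cong W\rtimes\SL(n,\Z)$ for $W$ a sum of standard modules), but your proposed route -- lift relative property (T) through the central extension $1\to\Z^k\to H_2\to H_2/\Z^k\to 1$ -- stalls exactly where you say it does, and it is not clear it can be completed along those lines: the composition argument for normal subgroups would require relative property (T) of $\Z^k < H_2$, and since $\Z^k$ is \emph{central} the dual action of $H_2$ on $\widehat{\Z^k}$ is trivial, so Burger's invariant-measure criterion gives nothing and one is pushed into the Heisenberg/Weil-representation analysis you only sketch ``morally.'' The paper avoids all of this with a bounded generation argument: let $K\cong\M_{k,n}(\Z)$ and $L\cong\Z^n$ be the obvious subgroups of $H_1$, and let $K_i\cong\Z^n$ be the $i$-th row of $K$. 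Each of $L,K_1,\dots,K_k$ sits inside a copy of $\Z^n\rtimes\SL(n,\Z)$ contained in $H_2$ (via the standard, resp.\ dual-standard, action), so each has the relative property (T) in $H_2$ for $n\geq 2$; since $K=K_1\cdots K_k$ and $H_1=KLKL$, almost invariant vectors for $H_2$ become uniformly almost invariant on all of $H_1$, hence yield $H_1$-invariant vectors. You should replace your central-extension sketch by this (or supply the missing Weil-representation analysis, which would be considerably longer).
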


\begin{proof}
1.\ Define the following subgroups of $H_1$.
$$K = \begin{pmatrix} I_k & \M_{k,n}(\Z) & 0 \\ 0 & I_n & 0 \\ 0 & 0 & 1 \end{pmatrix} \qquad , \qquad
L = \begin{pmatrix} I_k & 0 & 0 \\ 0 & I_n & \Z^n \\ 0 & 0 & 1 \end{pmatrix} \; .$$
Embedding $\Z^n$ as the $i$-th row into $\M_{k,n}(\Z)$, we define the subgroup $K_i < K$. Since $n \geq 2$, we know that $\Z^n \subset \Z^n \rtimes \SL(n,\Z)$ has the relative property (T). Therefore, $L < H_2$ and $K_i < H_2$ have the relative property (T). The group $H_1$ is boundedly generated by $L,K_1,\ldots,K_k$. More precisely one checks easily that $K=K_1 \cdots K_k$ and $H_1 = KLKL$. Hence $H_1 < H_2$ has the relative property (T).

2.\ The action $\Gamma \actson \Hh$ is ergodic if and only if $\{g x \mid g \in \Gamma\}$ is infinite for all $x \in \Z^{k+n} - \{0\}$. It is easy to see that the latter is equivalent with $\{A x \mid A \in \Lambda\}$ being infinite for all $x \in \Z^k - \{0\}$.

3.\ Denote the action $\Gamma \actson \rL(\Z^{k+n}) = \rL^\infty(\T^{k+n})$ by $(\si_g)_{g \in \Gamma}$. Denote by $\tau$ the natural tracial state on $\rL(\Z^{k+n})$, corresponding to integration with respect to the Lebesgue measure on $\rL^\infty(\T^{k+n})$. Let $\|\, \cdot \,\|_2$ be the associated $2$-norm. View $\rL(\Z^k) \subset \rL(\Z^{k+n})$ and denote by $E : \rL(\Z^{k+n}) \recht \rL(\Z^k)$ the unique trace preserving conditional expectation. It suffices to prove the following claim: if
$a_i \in \rL(\Z^{k+n})$ is a bounded sequence satisfying $\|a_i - \si_g(a_i)\|_2 \recht 0$ for all $g \in \Gamma$, then $\|a_i - E(a_i)\|_2 \recht 0$. To prove this claim, it suffices to show the stronger statement that $(\si_g)_{g \in \Gamma}$, viewed as a unitary representation on $\ell^2(\Z^{k+n})$, is such that its restriction to $\ell^2(\Z^{k+n}) \ominus \ell^2(\Z^k)$ does not weakly contain the trivial representation. The latter follows because, for $n \geq 2$, the unitary representation $\SL(n,\Z) \recht \cU(\ell^2(\Z^n - \{0\}))$ does not weakly contain the trivial representation.

4.\ Because $\SL(k+n,\Z) \actson \T^{k+n}$ is essentially free and $\Gamma \subset \SL(k+n,\Z)$, also $\Gamma \actson \Hh$ is essentially free.

Concrete example.\ Take $A$ and $\Lambda = A^\Z$ as in the theorem. Since $\Lambda \cong \Z$ is amenable, the action $\Lambda \actson \T^k$ is not strongly ergodic. By 3, also $\Gamma \actson \Hh$ is not strongly ergodic. All eigenvalues of all matrices in $\Lambda - \{I_k\}$ differ from $1$. Hence, $B x \neq x$ whenever $x \in \Z^k - \{0\}$ and $B \in \Lambda - \{I_k\}$. Hence, $\{B x \mid B \in \Lambda\}$ is infinite for all $x \in \Z^k - \{0\}$. By 2, $\Gamma \actson \Hh$ is ergodic.
\end{proof}

\section*{Concluding remarks}

{\bf 1)\ } If $k \geq 0$ and $n \geq 3$, the group $H_2$ defined above has property (T) and its center is isomorphic with $\Z^k$.

{\bf 2)\ } If $n =2$ and if $\Lambda \subset \SL(k,\Z)$ has the Haagerup property, then $G/H_1 \cong \Lambda \times \SL(2,\Z)$ has the Haagerup property, so that $\rL H_1 \subset \rL G$ is an HT inclusion in the sense of \cite[Definition 6.1]{Po01}.

{\bf 3)\ } Let $\Lambda \subset \SL(k,\Z)$ be a subgroup such that $\Lambda x$ is infinite for all $x \in \Z^k \setminus \{0\}$ and $\Lambda \actson \widehat{\Z^k}$ is non-rigid, e.g.\ because $\Lambda$ is amenable. Denote by $\pi : \Gamma \recht \Lambda$ the natural quotient homomorphism. The action $\Gamma \actson \widehat{\Z^k}$ given by $g \cdot \om = \pi(g) \cdot \om$ is a \emph{non-rigid quotient} of the rigid action $\Gamma \actson \widehat{H}$. Since $\Gamma$ acts on $\widehat{\Z^k}$ through the quotient homomorphism $\pi$, the action $\Gamma \actson \widehat{\Z^k}$ is not free. The following simple modification also provides free counterexamples: the diagonal action $\Gamma \actson \widehat{H} \times \widehat{H}$ given by $g \cdot (\om,\om') = (g \cdot \om,g\cdot \om')$ is free, ergodic, rigid and admits the diagonal action $\Gamma \actson \widehat{\Z^k} \times \widehat{H}$ given by $g \cdot (\om,\om') = (\pi(g) \cdot \om,g \cdot \om')$ as a quotient. This quotient action is free and ergodic. To see that it is non-rigid, it suffices to observe that $\Z^k \times H \subset (\Z^k \times H) \rtimes \Gamma$ does not have the relative property (T). The latter holds because
$$\Z^k \times H \subset (\Z^k \times H) \rtimes \Gamma \subset (\Z^k \times \Lambda) \times (H \rtimes \Gamma)$$
and because $\Z^k \times \Lambda$ is amenable.

{\bf 4)\ }
By Theorem \ref{ourthm1}, for $n \geq 3$, the natural action of
$$\Gamma_1 = \begin{pmatrix} 1 & \M_{1,n-1}(\Z) \\ 0 & \SL(n-1,\Z)\end{pmatrix}$$
by automorphisms of $\Z^{n}$ yields the rigid embedding $\rL \Z^{n} \subset \rL \Z^{n} \rtimes \Gamma_1$ and hence a rigid pmp action $\Gamma_1 \actson \T^{n}$. Under the natural isomorphism $\rL \Z^{n} \cong \rL^\infty(\T^{n})$, the automorphism of $\rL \Z^{n}$ given by $A \in \SL(n,\Z)$ gets transformed into the pmp transformation of $\T^{n} = \R^{n} / \Z^{n}$ given by $(A^{-1})^t$. So, the natural action of
$$\Gamma_2 = \begin{pmatrix} 1 & 0 \\ \M_{n-1,1}(\Z) & \SL(n-1,\Z) \end{pmatrix}$$
on $\R^n / \Z^n = \T^{n}$ is rigid. The $\Gamma_2$-invariant functions in $\rL^\infty(\T^{n})$ are precisely the functions that only depend on the first variable. Whenever $\al \in \R - \Q$, multiplication by $(\exp(2\pi i \al x_1),\ldots,\exp(2\pi i \al x_{n}))$ provides an ergodic pmp action of $\Z^{n}$ on $\T^{n}$. This action is in semidirect product position with the action of $\Gamma_2$ that we considered so far. Together they yield an action of
$$\Gamma = \left( \mbox{}\hspace{-6ex}\begin{matrix} 1 & 0 & 0 \\ \Z & 1 & 0 \\ \hspace{7ex}\M_{n-1,2}(\Z)\hspace{-3ex}\mbox{} & & \SL(n-1,\Z) \end{matrix}\right)$$
on $\T^{n}$. This provides a different family of examples of rigid, free, pmp actions that are ergodic but not strongly ergodic.

{\bf 5)\ }
For all $n \geq 4$, Theorem \ref{ourthm1} provides examples of subgroups $\Gamma \subset \SL(n,\Z)$ such that the corresponding action $\Gamma \actson \T^n$ is rigid and ergodic, but not strongly ergodic. We prove that for $n \leq 3$ such examples do not exist. So, let $n \leq 3$ and let $\Gamma \subset \SL(n,\Z)$ be a subgroup such that $\Gamma \actson \R^n/\Z^n = \T^n$ is rigid, but not strongly ergodic. We prove that $\Gamma \actson \T^n$ is not ergodic. Since $\Gamma \actson \T^n$ is rigid, the group $\Gamma$ is non-amenable. In particular, $\Gamma$ has no abelian finite index subgroup. From the strong ergodicity criterion in \cite[Theorem 6.5(ii)]{FS96} it then follows that the linear action of $\Gamma$ on $\R^n$ has a non-trivial invariant subspace. Since $n \leq 3$, this invariant subspace has dimension 1 or co-dimension 1. We find a non-zero vector $v \in \R^n$ such that $v$ is either a common eigenvector for the matrices in $\Gamma$ or a common eigenvector for the matrices in the transpose $\Gamma^t$ of $\Gamma$.

We make use of the following observation that we prove below: if $n \in \{2,3\}$ and if $v \in \R^n - \{0\}$ is a common eigenvector for all the matrices in a non-amenable subgroup $\Gamma \subset \SL(n,\Z)$, then there exists $A \in \SL(n,\Z)$ such that $A^{-1} \Gamma A \subset K$ where
$$K = \begin{pmatrix} \pm 1 & \Z^{n-1} \\ 0 & \GL(n-1,\Z) \end{pmatrix} \; .$$
Using this observation, we may assume that either $\Gamma$ or $\Gamma^t$ is a subgroup of $K$. The projection $\Z^n \recht \Z$ on the first coordinate, extends to a group homomorphism $\Z^n \rtimes K^t \recht \Z \rtimes \Z/2\Z$, implying that $\Z^n \subset \Z^n \rtimes K^t$ does not have the relative property (T). Taking the Fourier transform, it follows that $K \actson \R^n/\Z^n$ is not rigid. So, it is impossible that $\Gamma \subset K$. Hence, we may assume that $\Gamma \subset K^t$. Since $K^t \actson \R^n/\Z^n$ is not ergodic, it follows that $\Gamma \actson \R^n/\Z^n$ is not ergodic.

It remains to prove the observation in the previous paragraph. Let $\om : \Gamma \recht \R^*$ be a group morphism and $v \in \R^n$ a non-zero vector such that $B v = \om(B) v$ for all $B \in \Gamma$. Let $\al_1,\ldots,\al_k \in \R$ be a basis over $\Q$ for the $\Q$-linear span of the coefficients of $v$. Take $v_1,\ldots,v_k \in \Q^n$ such that $v = \al_1 v_1 + \cdots \al_k v_k$. It follows that for all $B \in \Ker \om$ and all $i = 1,\ldots,k$, we have $B v_i = v_i$.
Since $\Ker \om$ is non-amenable and $n \leq 3$, it follows that $\lspan\{v_1,\ldots,v_k\}$ is one-dimensional. We conclude that $v$ is a multiple of a vector in $\Q^n$. Hence, we find a non-zero vector $w \in \Z^n$ such that the greatest common divisor of all coefficients of $w$ equals $1$ and $B w = \om(B) w$ for all $B \in \Gamma$. Denote by $e_1 \in \Q^n$ the first standard basis vector. Take $A \in \SL(n,\Z)$ such that $A e_1 = w$. It follows that
$$A^{-1} \Gamma A \subset \begin{pmatrix} * & \R^{n-1} \\ 0 & \GL(n-1,\R) \end{pmatrix} \; .$$
Since $A^{-1} \Gamma A \subset \SL(n,\Z)$, it follows that actually $A^{-1} \Gamma A \subset K$.

\end{document}